\documentclass[10pt,a4paper, xcolor]{article}

\usepackage{enumerate,lineno}
\usepackage{url,hyperref}
\usepackage{amsfonts}
\usepackage{mathtools,bbm}
\usepackage[latin1]{inputenc}

\usepackage{algorithm}
\usepackage[noend]{algpseudocode}

\usepackage{amsfonts}
\usepackage{amssymb,amsthm, amsmath,graphicx,bm,ebezier}

\vfuzz10pt \hfuzz10pt

\newtheorem{theorem}{Theorem}
\newtheorem{definition}[theorem]{Definition}
\newtheorem{proposition}[theorem]{Proposition}
\newtheorem{corollary}[theorem]{Corollary}
\newtheorem{lemma}[theorem]{Lemma}

\theoremstyle{remark}

\newtheorem{example}[theorem]{Example}
\newtheorem{remark}[theorem]{Remark}

\def\CaJ{\mathcal{J}}

\def\FraC{\mathcal{C}}

\def\k{\mathbbmss{k}}
\def\N{\mathbb{N}}
\def\R{\mathbb{R}}
\def\Z{\mathbb{Z}}
\def\Q{\mathbb{Q}}

\def\Tint{\mathrm{Tint}}
\def\ap{\mathrm{Ap} }
\def\k{\mathbbmss{k}}

\renewcommand{\u}{{\mathfrak u}}
\def\w{{\mathfrak w}}
\newcommand{\G}{{\mathcal G}}

\title{Proportionally modular affine semigroups}

\author{J. I. Garc\'{\i}a-Garc\'{\i}a\footnote{Departamento de Matem\'aticas, Universidad de C\'adiz,
E-11510 Puerto Real (C\'{a}diz, Spain). E-mail: ignacio.garcia@uca.es. Partially supported by MTM2014-55367-P (MINECO, Spain), FEDER funds and Junta de Andaluc\'{\i}a group FQM-366. }\\
M. A. Moreno-Fr\'{\i}as\footnote{Departamento de Matem\'aticas, Universidad de C\'adiz,
E-11510 Puerto Real (C\'{a}diz, Spain). E-mail: mariangeles.moreno@uca.es.
Partially supported by MTM2014-55367-P (MINECO, Spain), FEDER funds and Junta de Andaluc\'{\i}a group FQM-298.}\\
A. Vigneron-Tenorio\footnote{Departamento de Matem\'aticas, Universidad de C\'adiz,
E-11406 Jerez de la Frontera (C\'{a}diz, Spain). E-mail: alberto.vigneron@uca.es. Partially supported by MTM2015-65764-C3-1-P (MINECO/FEDER, UE) and Junta de Andaluc\'{\i}a group FQM-366.}\\
}

\sloppy
\date{}

\begin{document}
	
\maketitle
	
\begin{abstract}

This work introduces a new kind of semigroup of $\N^p$ called proportionally modular affine semigroup.
These semigroups are defined by modular Diophantine inequalities and they are a generalization of proportionally modular numerical semigroups.
We give an algorithm to compute their minimal generating sets.
We also specialize on the case $p=2$. For this case,
we provide a faster algorithm to compute their minimal system of generators, prove they are Cohen-Macaulay and Buchsbaum, and determinate their (minimal) Frobenius vectors.
Besides, Gorenstein proportionally modular affine semigroups are characterized.

\smallskip
{\small \emph{Keywords:} affine semigroup, Buchsbaum ring, Cohen-Macaulay ring, Frobenius vector, Gorenstein ring, modular Diophantine inequalities, numerical monoid, numerical semigroup}.
		
\smallskip
{\small \emph{MSC-class 2010:} 20M14 (Primary), 11Y50, 13H10, 11D07 (Secondary).}
		
\end{abstract}
	
\section*{Introduction}
All monoids and semigroups appearing in this work are commutative. For this reason, we omit this adjective in the sequel.

Given two non-negative integers $a,b$ with $b\neq 0$, we denote by $a\mod b$ the remainder of the Euclidean division of $a$ by $b.$
For a rational number $p/q$ with $\gcd (p,q)=1,$ we say that $p/q= 0 \mod b$ if $p=0 \mod b,$ and for two rational numbers $p/q$ and $p'/q'$ we say that $p/q = p'/q' \mod b$ if $p/q - p'/q' = 0 \mod b.$
A proportionally modular Diophantine inequality in one variable is an expression of the form $a x\mod b\leq cx$ with $a$, $b$ and $c$ positive integers.
The set $S$ of non-negative integer solutions of that modular inequality is a numerical semigroup, that is, it is a subset of the set of non-negative integers $\N$ that is closed under addition, $0\in S$ and $\N\setminus S$ has finitely many elements. So, the submonoids of $\N$ of the form $S=\{x\in \N~|~ a x\mod b\leq cx\}$ are called proportionally modular numerical semigroups.
They were introduced in \cite{MR2020273}, and many papers about them have been written (see, for example
\cite{MR3231518}, \cite{MR2753834}, \cite{MR2389850}, \cite{MR2796773}).

In this work, we
introduce proportionally modular affine semigroups as a generalization of proportionally modular numerical semigroups.
Instead of using three integers $a,b$ and $c$, we use
two nonnull linear functions $f,g:\Q^p\to \Q$ and a natural number $b.$
With these elements, we define the semigroup
\[S=\{x\in\N^p~|~f(x)\mod b\leq g(x)\}\]
which it is called proportionally modular affine semigroup.
In Theorem \ref{thfg}, we provide an algorithm to compute the minimal generating of $S.$
Besides, we prove that the intersection of every
rational straight line with a proportionally modular affine semigroup is
isomorphic to a proportionally modular numerical semigroup.
This makes possible to view every proportionally modular affine semigroup as a beam of proportionally modular numerical semigroups.

In the second part of this work, we focus on
proportionally modular affine semigroups of $\N^2$ and we explore some of their properties.
Using that any nontrivial proportionally modular affine semigroup $S$ of $\N^2$ is simplicial, we study some of the properties of its associated semigroup ring $\k[S]$ from the semigroup $S.$
These properties are the Cohen-Macaulayness, Gorensteiness and Buchsbaumness, which
have been widely studied in ring theory, but if we try to search
these kind of rings, few methods to obtain them are found (see \cite{RosalesBuchs}, \cite{MR1732040}, \cite{RosalesCM}, \cite{MR881220} and references therein).
In particular, we prove that these semigroups are Cohen-Macaulay and Buchsbaum, and we characterize when they are Gorenstein.
That allows us to affirm that an application of modular Diophantine inequalities to Commutative Algebra is the construction of special kinds of rings.
For these semigroups, we also give a geometrical algorithm for a fast computation of the minimal generating set of $S,$ and we determinate the (minimal) Frobenius vectors of these semigroups (some references to Frobenius vectors are found in \cite{Assi} and \cite{1311.1988}).
In this work, all the examples have been done using the software available at \cite{ProporcionallyModularAffineSemigroupN2}.

The content of this work is organized as follows:
in Section \ref{preliminaires}, we provide some basic definitions and results on monoids and semigroup rings.
In Section \ref{p-m-a-s}, we give the definition of proportionally modular semigroup, we represent it as beams of proportionally numerical semigroups, and we show an algorithm to compute its minimal generating set.
In Section \ref{n2}, we provide a faster algorithm to obtain a system of generators of a given proportionally modular affine semigroup of $\N^2$. For this semigroup, we also give its set of (minimal) Frobenius vectors. Finally, in Section \ref{propiedades_n2}, we study methods to check the above mentioned properties of the semigroup ring of a proportionally modular affine semigroup of $\N^2.$

\section{Preliminaries and notations}\label{preliminaires}

A semigroup is a pair $(S,+)$, with $S$ a nonempty set and $+$ a binary operation defined on $S$ verifying the associative law. In addition, if there exists an element $0\in S$ such that $a+0=0+a$ for all $a\in S$, we say that $(S,+)$ is a monoid. Given a subset $A$ of a monoid $S$, the monoid generated by $A$, denoted by $\langle A\rangle$, is the least (with respect to inclusion) submonoid of $S$ containing $A$. When $S=\langle A\rangle$, we say that $S$ is generated by $A$ or that $A$ is a system of generators of $S$. The monoid $S$ is finitely generated if it has a finite generating set. Finitely generated submonoids of $\N^p$ are known as affine semigroups, and they are called numerical semigroups when $p=1.$

Given a system of linear equations or linear inequalities, a solution is called $\N$-solution if it is a non-negative integer solution (see \cite{Nsolutions} for details).
For a subset $A\in\Q^p$, denote by $\text{ConvexHull}(A)$ the convex hull of the set $A$, that is, the smallest convex subset of $\Q^p$ containing $A$.

In this work, the product ordering in $\N^p$ is denoted by $\preceq.$ So, given two elements $x,y\in \N^p$, $x\preceq y$ if $y-x\in \N^p.$ Besides, we denote by $||x||$ the 1-norm of $x$ (i.e. $||x||=\sum _{i=1}^p |x_i|$) and by $[k]$ the set $\{1,\ldots ,k\}$ for every $k\in\N.$
We use
${\rm L}(A)$ to denote the set
$\{ \sum_{i=1}^m \lambda_i a_i~|~ \lambda_i\in \Q_\geq,~a_i\in A,~m\in \N\}$, this set is known as
the rational cone of $A$.

For a better understanding of the last section of this work, we need to recall some definitions. Let $R$ be a Noetherian local ring, a finite $R$-module
$M\neq0$  is a Cohen-Macaulay module if ${\rm depth}(M)=\dim(M)$. If $R$ itself
is a Cohen-Macaulay module, then it is called a Cohen-Macaulay ring (see \cite{MR1251956}).
A Gorenstein ring is a special case of Cohen-Macaulay ring: a Gorenstein local ring is a Noetherian commutative local ring $R$ with finite injective dimension, as an $R$-module (see \cite{MR0153708}).
The last concept, Buchsbaum ring, is defined as follows: a noetherian $R$-module $M$ is called a Buchsbaum module is every system of parameters of $M$ is a weak $M$-sequence, and $R$ is a Buchsbaum ring if it is Buchsbaum module as a module over itself (see \cite{MR602063} and \cite{MR881220} for details).
For every $(S,+)$ finitely generated commutative monoid and a field $\k,$  we denote by $\k[S]$ the semigroup ring of $S$ over $\k$.
Note that $\k[S]$ is equal to $\bigoplus_{m \in S} \mathbbmss{k} \chi^m$ endowed with a multiplication which is $\mathbbmss{k}$-linear and such that $\chi^m \cdot \chi^n = \chi^{m+n}$ for every $m,n \in S$ (see \cite{MR1600261}).
We say that $S$ is a  Cohen-Macaulay/Gorenstein/Buchsbaum semigroup if $\k[S]$ is a Cohen-Macaulay/Gorenstein/Buchsbaum ring.

\section{Proportionally modular affine semigroups}\label{p-m-a-s}

Let $f,g:\Q^p\to \Q$ be two nonnull linear functions and let $b\in\N\setminus\{0\}$.
If $x,y\in\N^p$ verify $f(x)\mod b\leq g(x)$ and $f(y)\mod b\leq g(y)$, by the linearity
of $f$ and $g$, we have
$f(x+y)\mod b = (f(x)+f(y))\mod b\leq (f(x)\mod b) +(f(y)\mod b)\leq g(x)+g(y)=g(x+y)$.
Clearly, $f(0)\mod b=0\leq g(0)$.
In this way, the set of $\N-$solutions of every inequality of the form $f(x)\mod b\leq g(x)$ is a submonoid of $\N^p$.
Every submonoid $S$ of $\N^p$ obtained as above is called a	 proportionally modular monoid.

Let us suppose that $f(x_1,\dots,x_p)=f_1x_1+\dots+f_px_p$ and $g(x_1,\dots,x_p)=g_1x_1+\dots+g_px_p$ with $f_i,g_i\in \Q$ for all $i=1,\dots,p$, and $b\in \Q_\geq$.
If $d\in\N$, then an element $x\in\N^p$ verifies $f(x)\mod b\leq g(x)$ if and only if
$df(x)\mod db\leq dg(x)$.
Hence, by multiplying the inequality $f(x)\mod b\leq g(x)$ by $d$ the least common multiple of the denominators of
$f_1,\dots,f_p,g_1,\dots,g_p,$ and $b$, we obtain an inequality where $db\in\N$ and the coefficients of
$df$ and $dg$ are integers.
So, in the sequel, we assume that
$f(x_1,\dots,x_p)=f_1x_1+\dots+f_px_p$ and $g(x_1,\dots,x_p)=g_1x_1+\dots+g_px_p$ with $f_i,g_i\in\Z$
and $b\in \N$.

\begin{remark}\label{pmeje}
If we intersect $S$ with any axis, the set obtained is formed by the elements of $\N^p$ fulfilling a proportionally modular Diophantine inequality or it is equal to $\{0\}$. Thus, this intersection is isomorphic to a proportionally modular numerical semigroup.	
Besides, every $x\in \N^p$ satisfying $g(x)\geq b$ belongs to $S$.
\end{remark}

\begin{remark}
Let $w=(w_1,\dots,w_p)\in\N^p\setminus\{0\}$ such that $\gcd(w_1,\dots,w_p)=1$. The set $\{\lambda w|\lambda \in \Q\}\cap \N^p$ is equal to $\{x w~|~x\in \N\}$. If $g(w)>0$, then $g(xw)>0$ for all $x\in \N\setminus\{0\}$. Since $g(x_1,\dots,x_p)=g_1x_1+\dots+g_px_p$
and $f(x_1,\dots,x_p)=f_1x_1+\dots+f_px_p$ with $f_i,g_i\in\Z$ for all $i\in[p]$, then
$g(xw)=g(xw_1,\dots,xw_p)=(g_1w_1+\dots+g_pw_p)x$ and
$f(xw)=f(xw_1,\dots,xw_p)=(f_1w_1+\dots+f_pw_p)x$.
The element $c'=g_1w_1+\dots+g_pw_p$ is in $\N$ and $a'=f_1w_1+\dots+f_pw_p$ belongs to $\Z$.
So, $xw\in S$ if and only if $ {a'}  x\mod b\leq  {c'}  x$.
Hence, the submonoid $\{x w~|~x\in \N\}\cap S$ is isomorphic to a proportionally modular numerical semigroup.
	
We suppose now that $g(w)=0$. Let $u$ be the nonnull element of $\{\lambda w|\lambda \in \Q\}\cap \N^p$ closest to the origin verifying that $f(u)\mod b\leq g(u)$. By the linearity of $g$ we have $g(u)=0$
and thus $f(u)\mod b=0$.
Assume that $w'\in \{\lambda w|\lambda \in \Q\}\cap \N^p$ and $f(w')\mod b\leq g(w')$.
If $w'\not\in \{k u~|~k\in\N\}$, consider $k'\in\N$ such that $\|k'u\|<\|w'\|$ and $\|(k'+1)u\|>\|w'\|$.
The element $w'-k'u\in\N^p$ verifies $g(w'-k'u)=0$, $f(w'-k'u)\mod b=0\leq g(w'-k'u)$ and
$\|w'-k'u\|<\|u\|$, which is a contradiction. Thus, $\{\lambda w~|~\lambda \in \Q,~f(\lambda w)\mod b\leq g(\lambda w)\}\cap \N^p$ is equal to
$\{\lambda u~|~\lambda \in \N\}$ and this submonoid is isomorphic to $\N$.

The above two paragraphs allow us to view  proportionally modular semigroups  as beams of proportionally modular numerical semigroups.
\end{remark}

In the following result we give an effective proof that a proportionally modular semigroup is finitely generated. This proof is tailored to these semigroups.

\begin{theorem}\label{thfg}
Every proportionally modular monoid of $\N^p$ is finitely generated.
\end{theorem}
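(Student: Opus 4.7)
The plan is to realize $S$ as the image, under a coordinate projection, of a finitely generated monoid of lattice points in a rational polyhedral cone; finite generation of the latter will come from Gordan's lemma, i.e.\ the classical finiteness result for $\N$-solutions of finite systems of homogeneous linear inequalities already referenced in Section~\ref{preliminaires}.

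First, I would rewrite the defining condition of $S$ without the $\bmod$ operator. The claim is that $x \in \N^p$ lies in $S$ if and only if there exists $n\in\Z$ with $0 \leq f(x) - nb \leq g(x)$. For the ``only if'' direction take $n = \lfloor f(x)/b\rfloor$, so that $f(x) - nb = f(x)\bmod b \leq g(x)$; for ``if'', observe that any such $n$ necessarily satisfies $n \leq \lfloor f(x)/b\rfloor$ (from $nb \leq f(x)$), and hence $f(x)\bmod b \leq f(x)-nb \leq g(x)$. Importantly, $n$ may be negative, since the integer coefficients $f_i$ of $f$ are not assumed non-negative.

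Second, I would split $n = n^+ - n^-$ with $n^\pm \in \N$ and introduce the auxiliary set
\[
\tilde S \;=\; \{(x, n^+, n^-) \in \N^{p+2}\,:\, f(x) + (n^- - n^+) b \geq 0,\ g(x) - f(x) + (n^+ - n^-) b \geq 0\}.
\]
Both defining inequalities are homogeneous linear with integer coefficients, so $\tilde S$ is the set of $\N$-solutions of such a system, equivalently the lattice points of a rational polyhedral cone sitting inside $\Q^{p+2}_{\geq 0}$. By Gordan's lemma it is a finitely generated submonoid of $\N^{p+2}$, and a finite generating set can be produced algorithmically as a Hilbert basis. Then I would verify that the coordinate projection $\pi : \N^{p+2} \to \N^p,\ (x, n^+, n^-) \mapsto x$, is a monoid homomorphism with $\pi(\tilde S) = S$: the inclusion $\subseteq$ is the ``if'' direction of step one, and the inclusion $\supseteq$ follows by taking, for $x\in S$, $n = \lfloor f(x)/b\rfloor$, $n^+ = \max(n, 0)$, $n^- = \max(-n, 0)$. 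Consequently the image under $\pi$ of any finite generating set of $\tilde S$ is a finite generating set of $S$.

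The main obstacle is not conceptually deep but bookkeeping-heavy: one has to accommodate those $x \in S$ at which $f(x) < 0$, which is exactly what forces the pair $(n^+, n^-)$ in place of a single variable in $\N$; a naive lift to $\N^{p+1}$ would silently discard such points. A welcome by-product of this approach is effectiveness, since the finite Hilbert basis of the cone cut out by the two inequalities above can be computed explicitly, which should underlie the algorithm the theorem is supposed to yield.
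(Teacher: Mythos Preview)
Your argument is correct and considerably more streamlined than the paper's own proof, but it proceeds along a genuinely different route. The paper argues by a direct case analysis on the signs of the coefficients of $g$: the cases where all $g_i$ share a sign are disposed of quickly, while the mixed-sign case is handled by decomposing $S$ according to the value of $g(x)$ (namely $g(x)=0$, $g(x)\in\{1,\dots,b-1\}$, or $g(x)\geq b$), solving a family of auxiliary Diophantine systems to cover the thin strips $g(x)=d$, and building up an explicit finite generating set through a recursive sequence of sets $\FraC_0,\FraC_1,\dots,\FraC_{b-1}$. Your approach instead linearizes the $\bmod$ operator by introducing a single auxiliary integer (split as $n^+-n^-$ to stay in $\N^{p+2}$), so that $S$ becomes a coordinate projection of the lattice points of a rational polyhedral cone, and finite generation drops out of Gordan's lemma in one stroke. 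What the paper's method buys is a description of the generators tailored to the stratification by $g$, which feeds directly into the later, more refined algorithm for $p=2$; what your method buys is brevity, uniformity (no case split), and an immediate reduction to a standard Hilbert-basis computation. Both are effective, and either yields an algorithm; they simply trade structural insight against conceptual economy.
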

\begin{proof}
We suppose again that $g(x_1,\dots,x_p)=g_1x_1+\dots+g_px_p$ with $g_1,\dots,g_p\in \Z$, and denote the $p$-tuple $(x_1,\dots,x_p)$ by $x$.
Let $S$ be the proportionally modular monoid defined by the inequality $f(x)\mod b\leq g(x)$.

We have two straightforward cases: $g_1,\dots,g_p< 0$ and $g_1,\dots,g_p > 0$.
If $g_1,\dots,g_p< 0$, then $S=\{0\}$.
If $g_1,\dots,g_p > 0$, every  set $\{x\in \N^p~|~g(x)=i\}$ with $i\in \N$ is finite.
Since $\N^p\setminus S\subset \cup_{i=0}^{b-1}\{x\in \N^p~|~g(x)=i\}$, the
set $\N^p\setminus S$ is finite, and  hence a minimal generating set of $S$ can be computed by using \cite[Corollary 9]{MR3110598}.

Assume that there exist $i,j\in [p]$ such that $g_ig_j\le 0.$ Let $U=\{\u_1,\dots,\u_t\}$ be the minimal generating set of the $\N$-solutions of the system of
Diophantine equations (see \cite{Nsolutions})
\[
\left\{
\begin{array}{l}
f(x)\mod b=0,\\
g(x)=0.
\end{array}
\right.
\]
Every $x\in\N^p$ verifying $g(x)\geq b$ is in $S$
and therefore
$S\cap \{x\in\N^p~|~g(x)\geq b\}$ is equal to
$\{x\in\N^p~|~g(x)\geq b\}$ (see Remark \ref{pmeje}).
So, we obtain that
\[\begin{multlined}
S\setminus \cup_{i=1}^{b-1}\{ x\in \N^p ~|~ g(x)=i\}=\\
S\cap (\{x\in\N^p~|~g(x)=0\}\cup \{x\in\N^p~|~g(x)\geq b\})=\\
(S\cap \{x\in\N^p~|~g(x)=0\}) \cup \{x\in\N^p~|~g(x)\geq b\}.
\end{multlined}\]
	
Take $x\in S\cap ( \cup_{i=1}^{b-1}\{ x\in \N^p ~|~ g(x)=i\})$ and
assume that $g(x)=d$ with $d\in\{1,\dots,b-1\}$. This element is a $\N$-solution
of a Diophantine system of equations of the form
\begin{equation}\label{eqk}
\left\{
\begin{array}{l}
f(x)\mod b=k,\\
g(x)=d.
\end{array}
\right.
\end{equation}
with $k=0,\dots,d$.
Let $M_{dk}$ be the set of minimal $\N$-solutions of (\ref{eqk}).
By \cite{Nsolutions}, the element
$x$ can be expressed as $m+\sum_{i=1}^t\lambda_i \u_i$ with $m\in M_{dk}$ and $\lambda_i\in \N.$ Figure \ref{figure_3D} illustrates graphically the sets $M_{dk}$ of an example.
\begin{figure}[h]
\begin{center}
\begin{tabular}{|c|}\hline
\includegraphics[scale=.65]{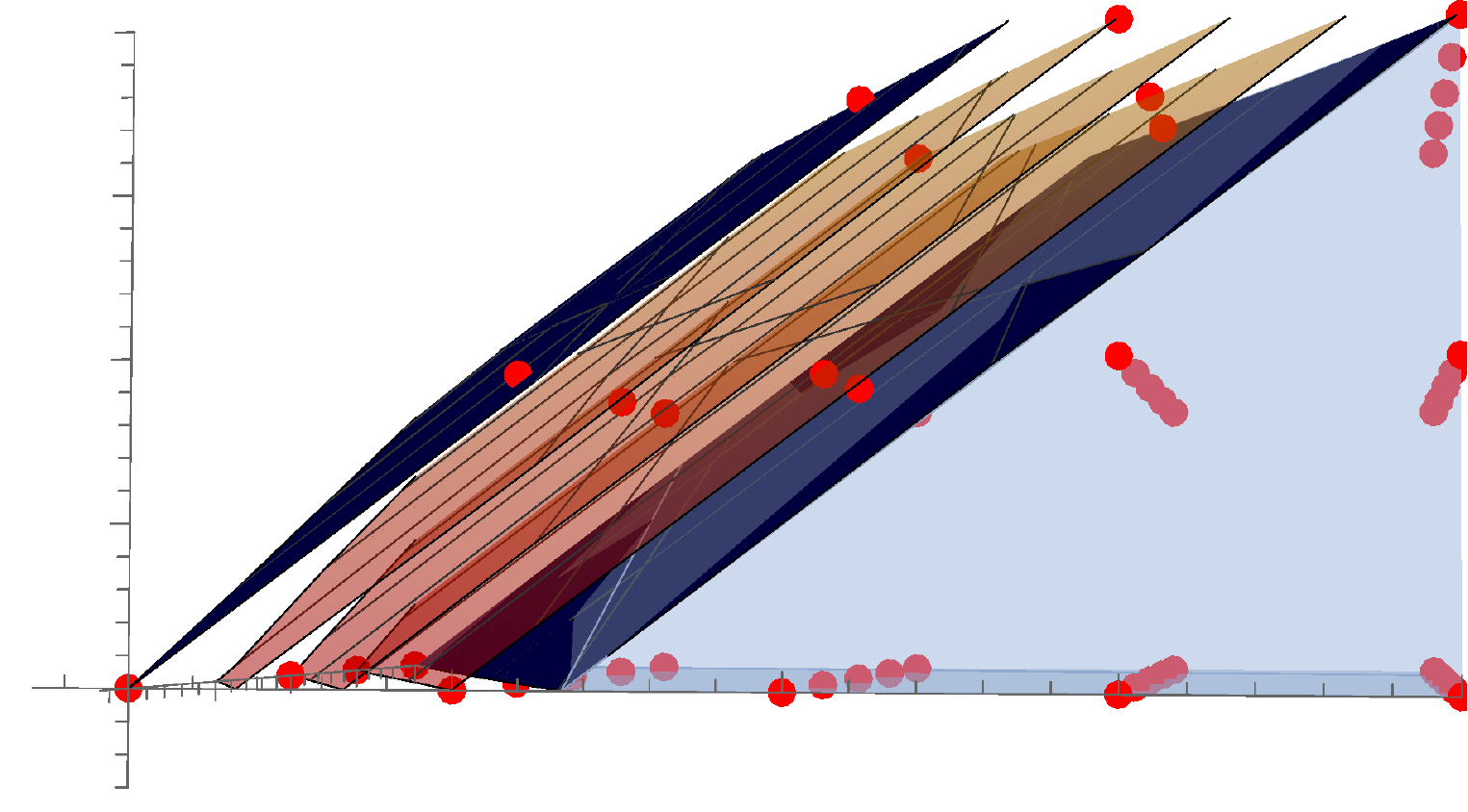}\\
\hline
\end{tabular}
\caption{Semigroup given by the inequality $5x+2y+z\mod 4 \le 3x+y-4z.$}\label{figure_3D}
\end{center}
\end{figure}

We now construct a generating set of the monoid
$$\{x\in\N^p~|~g(x)\geq b\}\cup (S\cap \{x\in\N^p~|~g(x)=0\}).$$
An graphical example of this monoid is showed in Figure \ref{figure_3D}.
Let $V=\{v_1,\ldots ,v_l\}$ be the minimal generating set of $\{x\in\N^p~|~g(x)=0\}.$ Note that $bv_i\in \langle U\rangle$ for all $i=1,\ldots ,l.$
Since the set ${\rm L}(S)\cap \N^p$ is the  cone determined by the hyperplanes
$g(x)\geq 0$, $x_1\geq 0,\dots,x_p\geq 0$, it is a
finitely generated monoid (see \cite[Section \S 7.2 and Theorem 16.4]{MR874114}).
Define $$\FraC_0=V\cup \{w_1^1,\ldots ,w_{n_1}^1,w_1^2,\ldots ,w_{n_2}^2,\ldots ,w_1^{b-1},\ldots ,w_{n_{b-1}}^{b-1},\widetilde{w}_1,\ldots ,\widetilde{w}_q\}$$
the minimal generating set of ${\rm L}(S)\cap \N^p$ satisfying:
\begin{itemize}
\item $g(w^i_j)=i$ for all  $i=1,\ldots ,b-1,$ and $j=1,\ldots ,n_i.$
\item $g(\widetilde{w}_j)\ge b$ for all  $j=1,\ldots ,q.$
\end{itemize}
Let  $\FraC_1$ be the finite set
$$(\FraC_0\setminus\{w_1^1,\ldots ,w_{n_1}^1\}) \cup \cup_{i\in [n_1]} \{2w_i^1,3w_i^1\}
\cup \cup _{j\in [n_1]} \{w_j^1+s|s\in \FraC_0\setminus V\}.
$$
Note that for every element $s\in \FraC_1,$ $g(s)=0$ or $g(s)\ge 2.$
Besides, if $s\in ({\rm L}(S)\cap \N^p)\setminus \{x\in\N^p~|~g(x)=1\}$
we also have that $g(s)=0$ or $g(s)\ge 2.$
If $g(s)=0$, then $s$ belongs to the semigroup generated by $V\subset \FraC_1.$
If $g(s)\ge 2,$ we consider $\lambda_j,\nu_{ij}, \mu_j\in \N$ such that
$$s=\sum _{j=1}^l \lambda _jv_j + \sum_{j=1}^{n_1} \nu _{1j}w^1_j +\sum_{i=2}^{b-1} \sum_{j=1}^{n_i} \nu _{ij}w^i_j + \sum_{j=1}^q \mu _j \widetilde{w}_j.$$
For each $\nu_{1j}>1,$ the addend $\nu_{1j}w^1_j$
can be replace by a non-negative integer linear combination of the elements of  $\{2w^1_j,3w^1_j\}\subset \FraC_1.$ Since $g(s)\ge 2,$
if there exists $\nu_{1j}=1,$ then there exists a nonnull coefficient $\nu_{i'j'}$
with $j'\neq j$ and/or there exists a nonnull coefficient $\mu_{j''}$.
Hence, $w^1_j+\nu_{i'j'}w^{i'}_{j'}$ or $w^1_j+\mu_{j''}\widetilde{w}_{j''}$
appears in the expression of $s$.
Note that both elements are obtained as a non-negative integer linear combination of elements of $\FraC_1.$
In any case, the element $s$ is in the semigroup generated by $\FraC_1,$
and therefore $\FraC_1$ is a system of generators of $({\rm L}(S)\cap \N^p)\setminus \{x\in\N^p~|~g(x)=1\}.$

Once we have the sets $\FraC_0$ and $\FraC_1$, the sets
$\FraC_k$ with  $k\in [b-1]$ are obtained recursively as follows:
$$\FraC_k=(\FraC_{k-1}\setminus\{w_1^k,\ldots ,w_{n_k}^k\}) \cup \cup_{i\in [n_k]} \{2w_i^k, 3w_i^k\}
\cup \cup _{j\in [n_k]} \{w_j^k+s|s\in \FraC_{k-1}\setminus V\}
.$$
Reasoning as above, it is straightforward to prove that the set  $\FraC_k$
is a system of generators of the semigroup $({\rm L}(S)\cap \N^p)\setminus \cup _{j=1}^k \{x\in\N^p~|~g(x)=j\}.$
Thus, $\FraC_{b-1}$ is a system of generators of  $\{x\in\N^p~|~g(x)\geq b\}\cup \{x\in\N^p~|~g(x)=0\}.$

Let $\FraC$ be the finite set $\FraC=(\FraC_{b-1}\setminus V)\cup U.$ Then, $S\cap \{x\in\N^p~|~g(x)=0\}=\langle \FraC\rangle\cap \{x\in\N^p~|~g(x)=0\}.$
Let $\widetilde{\FraC}$ be the finite set
$$\FraC\cup \bigcup _{w\in \FraC\cap \{x\in\N^p~|~g(x)\ge b\}}
\{z\in \N^p| w\prec z\preceq  w+\sum_{i=1}^lbv_i\},
$$
and let $s\in  \{x\in\N^p~|~g(x)\geq b\}\cup (S\cap \{x\in\N^p~|~g(x)=0\}).$
If $g(s)=0,$ $s$ can be obtained from the set $U\subset \widetilde{\FraC}.$ Otherwise, $$s=\sum _{\widehat{w}\in \FraC,g(\widehat{w})\ge b}\lambda_{\widehat{w}} \widehat{w} +\sum _{i=1}^l\mu _i v_i,
$$
with $\lambda _{\widehat{w}},\mu_i\in \N.$ For every $\mu_i,$ let $\mu_i',r_i\in \N$ be
the quotient and the remainder of the integer division $\mu_i/b$ ($\mu_i=\mu_i'b+r_i$ with $r_i\in [b-1]$).
With these values we have
$$s=\sum _{\widehat{w}\in \FraC,g(\widehat{w})\ge b}\lambda _{\widehat{w}} \widehat{w} +\sum _{i=1}^l\mu _i'b v_i +\sum _{i=1}^l r_i v_i,
$$
where $\sum _{i=1}^l\mu _i'b v_i\in \langle U\rangle\subset \langle \widetilde{\FraC}\rangle$ and $\sum _{\widehat{w}\in \FraC,g(\widehat{w})\ge b}\lambda _{\widehat{w}} \widehat{w}_i +\sum _{i=1}^l r_i v_i$
belong to $\langle\widetilde{\FraC}\rangle.$
Hence, $\widetilde{\FraC}$ is a system of generators of  $\{x\in\N^p~|~g(x)\geq b\}\cup (S\cap \{x\in\N^p~|~g(x)=0\})$, and thus
 a system of generators of $S$ is the finite set
$\widetilde{\FraC}\cup (\cup_{d=1}^{b-1}\cup_{k=0}^d M_{dk} )$.
\end{proof}

From the above proof we obtain an algorithm to compute systems of generators of proportionally modular affine semigroups.
Although, with this method it is necessary to solve several system of Diophantine equations
(this is a $\mathcal{NP}$-complete problem) and  consider several big sets of elements.
A particular case where we avoid these issues is when
$p=2$. This case is studied in next sections. We describe a geometrical approach, easier to solve, that allows us to determinate the Cohen-Macaulayness, Gorensteiness and Buchsbaumness.

\section{Proportionally modular affine semigroups of $\N^2$}\label{n2}

This section is about proportional modular semigroups associated to modular Diophantine inequalities into two variables: $f(x,y)\mod b\le g(x,y)$ where $f(x,y)=f_1x+f_2y,$ $g(x,y)=g_1x+g_2y$ with  $b\in \N$ and $f_1,f_2, g_1, g_2\in\Z$. As in previous sections, we denote by $S$ the proportionally modular affine semigroup associated to the above modular inequality. In this section, we provide a geometrical algorithm to compute their minimal generating sets. Besides, their associated (minimal) Frobenius vectors are studied.

Given a subset $A\subset \Q^2$,
we denote by $\Tint ({\rm L}(A))$ the topological interior of the cone ${\rm L}(A)$. This set is equal
to $\Tint(A)=\{ \sum_{i=1}^m \lambda_i a_i~|~ \lambda_i\in \Q_>,~a_i\in A,~m\in \N\}$.
Note that given a subsemigroup $S$ of $\N^2$ minimally generated by $\{s_1,\dots,s_p\}$ there exists a minimal set of elements $\{s_{i_1},\dots,s_{i_t}\}$ such that its associated cone ${\rm L}(S)$ is equal to $\{ \sum_{j=1}^t \lambda_j s_{i_j}~|~ \lambda_j\in \Q_\geq\}.$ The semigroup $S$ is called simplicial whenever ${\rm L}(S)={\rm L}(\{s_1,s_2\}).$ Note that every nontrivial proportionally modular semigroup of $\N^2$ is simplicial.

\begin{definition}\label{vectorU}
Assume $g(x,y)=g_1x+g_2y$ with $g_1g_2\leq 0$.
Denote by $\u$ the generator of the semigroup given by the $\N$-solutions of
\[
\left\{
\begin{array}{l}
g_1x+g_2y=0,\\
f_1x+f_2y\mod b=0.
\end{array}
\right.
\]
If $g_1\neq 0$, this element is the minimal solution of $(\frac {-g_2f_1+f_2g_1} {g_1} y \mod b=0)$.
Analogously, if $g_2\neq 0$ we also obtain only an element. In particular, note that $\u$ is a minimal generator of $S.$
\end{definition}

The vector $\u$ has a nice property which allows to obtain all the elements belonging to $S$ from a strip of $S.$

\begin{lemma}\label{traslacionU}
	Assume $g(x,y)=g_1x+g_2y$ with $g_1g_2\leq 0$, let $\u\in\N^2$ be as in Definition \ref{vectorU} and
	$v,w\in\N^2$ such that $v+\u=w$.
	Then, $v\in S$ if and only if $w\in S$.
\end{lemma}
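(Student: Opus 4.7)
The plan is to unpack the defining conditions of $\u$ and then exploit the linearity of $f$ and $g$ together with the behaviour of the remainder-mod-$b$ operation under addition.

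First I would observe that, by Definition \ref{vectorU}, the vector $\u$ satisfies two key identities: $g(\u)=0$ and $f(\u)\bmod b=0$ (equivalently $f(\u)\equiv 0\pmod b$). These are exactly the equations defining the $\N$-solutions of whose minimal generating set $\u$ is taken to be.

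Next, assuming $v+\u=w$ with $v,w\in\N^2$, I would use the $\Z$-linearity of $g$ to get $g(w)=g(v)+g(\u)=g(v)$, and the $\Z$-linearity of $f$ to get $f(w)=f(v)+f(\u)$. Because $f(\u)\equiv 0\pmod b$, reducing modulo $b$ gives $f(w)\bmod b=f(v)\bmod b$. Combining these two equalities, the inequality $f(w)\bmod b\leq g(w)$ holds if and only if $f(v)\bmod b\leq g(v)$, which is precisely the statement $w\in S\iff v\in S$.

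There is essentially no obstacle here: the lemma is a direct consequence of the definition of $\u$ together with linearity. The only mild subtlety to flag is that the equivalence $f(\u)\bmod b=0\Rightarrow (f(v)+f(\u))\bmod b=f(v)\bmod b$ relies on the fact that adding a multiple of $b$ to an integer does not change its remainder mod $b$, which I would state explicitly in passing. Everything else is a one-line computation in each direction.
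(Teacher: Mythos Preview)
Your proposal is correct and follows essentially the same approach as the paper: both arguments use the defining properties $g(\u)=0$ and $f(\u)\bmod b=0$ together with the linearity of $f$ and $g$. Your version is marginally cleaner in that you observe directly that $f(w)\bmod b=f(v)\bmod b$ (since $f(\u)$ is a multiple of $b$), whereas the paper routes the forward direction through the subadditivity inequality $(f(v)+f(\u))\bmod b\leq f(v)\bmod b+f(\u)\bmod b$; but this is a cosmetic difference, not a different strategy.
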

\begin{proof}
Assume $v\in S$. We have $f(v)\mod b\leq g(v)$ and $f(\u)\mod b=g(\u)=0$. Thus,
$f(w)\mod b=f(v+\u)\mod b=(f(v)+f(\u))\mod b \leq f(v)\mod b+f(\u)\mod b\leq g(v)=g(v)+g(\u)=g(v+\u)=g(w)$.

If $w\in S$, we can proceed similarly to obtain that $v\in S$.
\end{proof}

The following result provides us with an alternative geometrical and  effective proof of Theorem \ref{thfg} for a subsemigroup on $\N^2$.

\begin{proposition}\label{fgN2}
Every nontrivial proportionally modular semigroup $S$ of $\N^2$ is finitely generated.
\end{proposition}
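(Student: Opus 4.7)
The plan is to dispose of the easy sign cases and reduce the remaining one to a finite-strip analysis driven by the translation vector $\u$. If $g_1,g_2<0$ then $g(v)\le 0$ for every $v\in\N^2\setminus\{0\}$, forcing $S=\{0\}$, contrary to nontriviality. If $g_1,g_2>0$ the argument already present in the proof of Theorem \ref{thfg} shows $\N^2\setminus S$ is finite, whence $S$ is finitely generated. So I only need to treat $g_1g_2\le 0$ with $(g_1,g_2)\ne(0,0)$, and by swapping coordinates if necessary I may assume $g_1\ge 0\ge g_2$; in this situation the cone ${\rm L}(S)\cap\N^2$ sits between the $x$-axis and the line $g=0$, along which the vector $\u$ of Definition \ref{vectorU} points.

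Next I would exploit Lemma \ref{traslacionU}: translation by $\u$ preserves both $g$-level sets and $S$-membership. I would partition ${\rm L}(S)\cap\N^2$ into the lattice strips $T_d=\{v\in {\rm L}(S)\cap\N^2\mid g(v)=d\}$ for $d=0,\dots,b-1$ and the half-cone $T_{\ge b}=\{v\in {\rm L}(S)\cap\N^2\mid g(v)\ge b\}$, where by Remark \ref{pmeje} all of $T_{\ge b}$ already lies in $S$ while membership in each low strip must be tested pointwise. Each $T_d$ lies on a line parallel to $\u$, so the representative set $R_d=\{v\in T_d\cap S\mid v-\u\notin\N^2\}$ is finite, and Lemma \ref{traslacionU} gives $T_d\cap S=R_d+\N\u$. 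Applying the same idea to $T_{\ge b}$ together with the standard fact that ${\rm L}(S)\cap\N^2$ is a finitely generated affine semigroup (\cite[\S 7.2 and Theorem 16.4]{MR874114}) produces a finite set $R_{\ge b}$ whose $\u$-translates sweep out the entire half-cone inside $S$.

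A finite generating set for $S$ is then $\{\u\}\cup R_0\cup\cdots\cup R_{b-1}\cup R_{\ge b}$. The main obstacle I anticipate is not conceptual but bookkeeping: one must verify that these finite representative sets really do generate $S$ as a monoid, i.e.\ that every element of $S$ decomposes as an $\N$-linear combination of the proposed generators which stays in $S$ at each step, and that gluing the low strips $T_d$ to the high region $T_{\ge b}$ produces no missing generators at the interface $g=b$. Both checks should follow routinely from Lemma \ref{traslacionU} together with the fact that $T_{\ge b}$ absorbs any element of ${\rm L}(S)\cap\N^2$ under addition, so the proof ultimately reduces to careful geometric enumeration rather than a new technical insight.
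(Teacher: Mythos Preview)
Your reduction of the low strips $T_0,\ldots,T_{b-1}$ via Lemma~\ref{traslacionU} is sound and mirrors the paper. The gap is in $T_{\ge b}$: you assert the existence of a finite set $R_{\ge b}$ whose $\u$-translates sweep out the whole half-cone, but $\u$-translates of a finite set lie on finitely many lines parallel to $\u$, whereas $T_{\ge b}$ meets infinitely many such lines. Citing finite generation of ${\rm L}(S)\cap\N^2$ does not help directly, since the Hilbert-basis elements of that cone---for instance $(1,0)$---need not lie in $S$ and hence cannot serve as generators of $S$.

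A concrete obstruction: in the paper's running example ($f=3x-2y$, $g=x-3y$, $b=11$) one has $\u=(33,11)$ and $S\cap OX=\langle 4,5,11\rangle$. The element $(11,0)\in T_{\ge b}$ has second coordinate $0$, so any expression of it as an $\N$-combination of $\u$ together with elements of $R_0\cup\cdots\cup R_{10}$ can use only the $x$-axis points $(4,0),(5,0),(8,0),(9,0),(10,0)$; but $11$ is not a sum of numbers from $\{4,5,8,9,10\}$. Thus $R_{\ge b}$ must supply genuinely new generators, and your sketch gives no reason why finitely many suffice.

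The paper closes this gap by introducing a \emph{second} reduction direction: the minimal nonzero element $\widetilde{\u}$ of $S$ on the other extremal ray (the $x$-axis when $g_1>0\ge g_2$). Any $s\in S$ lying in the translated half-plane $\widetilde{\u}+\{g\ge b\}$ satisfies $s-\widetilde{\u}\in S$, so alternately subtracting $\u$ and $\widetilde{\u}$ drives every element of $S$ into the bounded parallelogram $\text{ConvexHull}(\{O,\u,\u+w+\widetilde{\u},w+\widetilde{\u}\})$, whose intersection with $S$ is the finite generating set $\G$. Once you bring in $\widetilde{\u}$ your outline becomes the paper's proof; without it the argument is incomplete.
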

\begin{proof}

We have $f,g:\Q^2\to \Q$. Assume that $g(x,y)=g_1x+g_2y$ with $g_1,g_2\in\Z$.
If $g_1,g_2< 0$, then $S=\{(0,0)\}$.
For the other cases we distinguish
two main cases: $g_1,g_2> 0$ and $g_1g_2\le 0.$

Assume that $g_1,g_2> 0$.  The elements of $\N^2$ not belonging to $S$ are in the straight lines
$g(x,y)=1,\dots, g(x,y)=b-1$. Furthermore, the intersection of every straight line $g(x,y)=d$ with
$\Q_\geq^2$ is the segment with endpoints $(0,\frac d {g_2})$, $(\frac d {g_1}, 0)$. Thus, in the straight
lines $g(x,y)=1,\dots, g(x,y)=b-1$ there are only a finite number of elements in $\N^2$.
Hence, $\N^2\setminus S$ is finite and the minimal generating set $S$ can be obtained from $\N^2\setminus S$ using \cite[Corollary 9]{MR3110598}.

Assume now that $g_1g_2\le 0.$ Let $\u=(u_1,u_2)$ be as in Definition \ref{vectorU}.
The intersection of $g(x)=b$ with one of the two axes is not empty.
Assume that this axis is the $x$-axis and then $g_1>0$ and $g_2\le 0$.
Let $w=(\frac {b} {g_1},0)$ be the point of intersection of the straight line $g(x,y)=b$ with the line $y=0,$ and $\widetilde{\u}=(\widetilde{u}_1,0)\in\N^2$ be the minimal generator of the affine semigroup $S\cap OX$ closest to the origin.
By Remark \ref{pmeje}, the element $\widetilde{\u}$ exists  and it is a minimal generator of $S.$
Fixed $s=(s_1,s_2)\in S,$ by Lemma \ref{traslacionU}, if $s_2\ge u_2,$ $s$ satisfies $s-\u\in S,$ and it can be obtained from an element $s'=(s_1',s_2')\in S$ with $s_2'<u_2$ adding $\lambda \u$ for some $\lambda\in \N.$
In case $s$ belongs to the half-space $\{(x,y)\in \R_+^2| g(x,y)\ge b\}+\widetilde{\u},$
the element $s-\widetilde{\u}$ belongs to $S.$ Let $\G$ be the finite set given by $S\cap (\R^2\setminus ((\{(x,y)\in \R^2| g(x,y)\ge b\}+\widetilde{\u}) \cup \{(x,y)\in R^2| y>u_2\})).$ This set is equal to  $S\cap \text{ConvexHull}(\{O,\u,\u+w+\widetilde{\u},w+\widetilde{\u}\})$ and $S\setminus \G$ is equal to $\{h+\lambda_1\u+\lambda_2\widetilde{\u} |h\in \G\mbox{ and }\lambda_1,\lambda_2\in \N\}.$ Then, $S$ is finitely generated by the elements belonging to $\G.$
Analogously, the case $g_1\le0$ and $g_2> 0$ can be solved considering $w=(0,\frac b {g_2}),$ and $\widetilde{\u}=(0,\widetilde{u}_2)\in S$ the minimal generator of the affine semigroup $S\cap OY$ closets to the origin.
\end{proof}

From now on, we denote by $w$ the element $\{x\in \R_+^2|g(x)=b\} \cap (OX\cup OY)$ when $g_1g_2\le 0,$ by $\G$ the set $S\cap \text{ConvexHull}(\{O,\u,\u+w+\widetilde{\u},w+\widetilde{\u}\}),$  and by $\widetilde{\u}$ the vector $(\widetilde{u}_1,0)$ or $(0,\widetilde{u}_2)$ as in above Proposition.

In Algorithm \ref{algoritmo} we formulate a faster algorithm to compute the minimal generating set of a proportionally modular semigroup in $\N^2.$
\begin{algorithm}
\caption{Computation of the minimal generating set of a proportionally modular affine semigroup $S$.}\label{algoritmo}
\textbf{Input:} The proportionally modular Diophantine inequality $f(x)\mod b\leq g(x)=g_1x+g_2y.$\\
\textbf{Output:} The minimal generating set of $S.$
\begin{algorithmic}[1]
\If {$g_1,g_2<0$} \Return $\{(0,0)\}$. \EndIf
\If {$g_1,g_2> 0$} compute the finite set $\N^2\setminus S.$ The minimal generating set $H$ of $S$ can be obtained from $\N^2\setminus S$ using \cite[Corollary 9]{MR3110598}.
\State \Return $H$.
\EndIf
\If {$g_1g_2\le 0$}
\State Compute the vector $\u$ defined in Definition \ref{vectorU}.
\If {$g_1\ge 0$} $\widetilde S:=\{(x,0)~|~f(x,0)\mod b\le g(x,0)\}$.
\EndIf
\If {$g_1< 0$} $\widetilde S:=\{(0,y)~|~f(0,y)\mod b\le g(0,y)\}$.
\EndIf
\State Compute the minimum minimal generator $\widetilde{\u}$ of the subsemigroup $\widetilde S$.
\State $w:= \{x\in \R_+^2|g(x)=b\} \cap (OX\cup OY).$
\State $\G:=S\cap \text{ConvexHull}(\{O,\u,\u+w+\widetilde{\u},w+\widetilde{\u}\})$.
\State Obtain $H$ a minimal system of generators from  $\G$.
\State \Return $H.$
\EndIf
\end{algorithmic}
\end{algorithm}
In this algorithm, for the case $g_1,g_2>0$ there exists an alternative way to compute the step 2. Let $w_1$ and $w_2$ be the intersections of $\{x\in\Q^2|g(x)=b\}$ with $OX$ and $OY$ respectively, and let $\widetilde{\u}_1$ and $\widetilde{\u}_2$ be the minimum minimal generators of the semigroups $S\cap OX$ and $S\cap OY$ respectively (note $\widetilde{\u}_1$ and $\widetilde{\u}_2$ are in the minimal generating set of $S$). Consider $\CaJ$ the set $\text{ConvexHull} (\{ O, w_1+\widetilde{\u}_1, w_2+\widetilde{\u}_2\}).$ It is straightforward to prove that $\N^2\setminus \CaJ\subset S$ and every $x\in \N^2\setminus \CaJ$ satisfies that $x- \widetilde{\u}_1$ and/or $x-\widetilde{\u}_2$ are/is in $S.$ Thus, the elements belonging to $\N^2\setminus \CaJ$ can be obtained from elements in $\CaJ\cap S$ and then a system of generators of $S$ is included in $\CaJ\cap S.$ So, an alternative way to do the step 2 is to compute $w_1,w_2,\widetilde{\u}_1,\widetilde{\u}_2,$ consider the set $\CaJ\cap S$ and take $H$ the minimal generating set of $S$ from $\CaJ\cap S.$

\begin{example}
Let $f(x,y)= 3x-2y,$ $g(x,y)= x-3y$, $b= 11$
and $S$ the proportionally modular affine semigroup defined by the modular inequality $f(x,y)\mod b\le g(x,y).$
In order to obtain a generating set of $S$, we follow the steps of Algorithm \ref{algoritmo}.
First, we compute the minimal non null vector $\u\in \N^2$ solving the system of modular equations $\{f(x,y)\mod b\le g(x,y),g(x,y)=0\}.$ In this case we have $7y \mod 11 =0.$ So, $\u=(33,11).$ For computing $\widetilde{\u}$ it is needed to solve the modular equation $f(x,0)\mod b\le g(x,0)\equiv 3x \mod 11 \le x$. The proportionally numerical semigroup obtained is the generated by $\{4,5,11\},$ and therefore $\widetilde{\u}=(4,0).$ The point $w=\{(x,y)\in \R^2|g(x,y)=11\} \cap OX$ is equal to $w=(11,0).$
Another set we need is
$\G=S\cap \text{ConvexHull}(\{(0,0),(33,11),(48,11),(17,0)\}).$
The last step is performed taking the minimal elements of $\G$.

The software \cite{ProporcionallyModularAffineSemigroupN2} allows us to
compute the minimal generating set of $S$ just as follows:
\begin{verbatim}
In[1]:= ProporcionallyModularAffineSemigroupN2[3, -2, 11, 1, -3]

Out[1]= {{4., 0.}, {5., 0.}, {5., 1.}, {8., 1.}, {9., 2.},
            {11., 0.}, {13., 3.}, {14., 4.}, {18., 5.},
            {19., 6.}, {23., 7.}, {28., 9.}, {33., 11.}}
\end{verbatim}

\end{example}

To conclude this section we make a geometric approach to the computation of the Frobenius vectors of a proportionally modular affine semigroup of $\N^2.$
By using this approximation, we present an algorithm to determine the minimal Frobenius vectors in these semigroups.

\begin{definition}
We say that an affine semigroup $T$ has a Frobenius vector if there exists an element $q\notin T$ belonging to the group $G(T)$ (the subgroup of $\N^p$ generated by $T$) such that $(q+\Tint ({\rm L} (T))) \cap G(T)\subset S\setminus \{0\}.$ A Frobenius vector is called minimal Frobenius vector if it is minimal with respect to the product ordering on $\N^p$.
\end{definition}

\begin{proposition}
Let $S\subset \N^2$ be a nontrivial proportionally modular semigroup. Then the following hold:
\begin{itemize}
\item If $g_1g_2\le 0,$ the unique minimal Frobenius vector of $S$ is the minimal integer element of $\text{ConvexHull} (\{O,\u,w,w+\u\})\setminus S$ closest to the line $\{x\in\R^2| g(x)=b\}.$
\item If $g_1g_2>0,$  the (minimal) Frobenius vectors are in the finite set $(\text{ConvexHull}(\{O,w_1,w_2\})\cap \N^2)\setminus S.$
\end{itemize}
\end{proposition}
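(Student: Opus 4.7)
My plan is to treat the two cases separately, since they rest on quite different geometric pictures.

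\textit{Case $g_1g_2>0$.} Both coordinates of $g$ must be positive, as $g_1,g_2<0$ would force $S=\{0\}$. The proof of Proposition \ref{fgN2} shows that $\N^2\setminus S$ is finite and entirely contained in the strip $\{x\in\N^2\mid g(x)<b\}$, which in turn lies in $\text{ConvexHull}(\{O,w_1,w_2\})$ because $g(x)\ge b$ forces $x\in S$ by Remark \ref{pmeje}. Since every Frobenius vector is by definition a gap (an element of $G(S)\setminus S$), the second bullet is immediate.

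\textit{Case $g_1g_2\le 0$.} Assume without loss of generality $g_1>0\ge g_2$, so that ${\rm L}(S)$ is the wedge between the positive $x$-axis and the ray through $\u$, and $P:=\text{ConvexHull}(\{O,\u,w,w+\u\})$ is a fundamental domain for the $\u$-translation action on the strip $\{x\in{\rm L}(S)\mid 0\le g(x)<b\}$. By Remark \ref{pmeje} every gap of $S$ sits in this strip, and by Lemma \ref{traslacionU} any such gap can be translated by a suitable multiple of $\u$ into $P$, preserving its $g$-value. Let $g^*$ denote the largest $g$-value attained by an integer gap in $P$, and let $q^*$ be the minimum in the product order among the integer gaps of $P$ at level $g^*$; this choice is unambiguous, because on a line $g(x)=g_0$ with $g_2\le 0$ the $y$-coordinate is a nondecreasing function of $x$, so product order is total there.

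I would then verify that $q^*$ is a Frobenius vector: for any $z\in\Tint({\rm L}(S))\cap\Z^2$ with $q^*+z\in G(S)$ one has $g(z)>0$ and hence $g(q^*+z)>g^*$. If $g(q^*+z)\ge b$ then $q^*+z\in S$ by Remark \ref{pmeje}; otherwise, subtracting an appropriate multiple of $\u$ places $q^*+z$ inside $P$ at the same $g$-value $>g^*$, which must lie in $S$ by maximality of $g^*$, and Lemma \ref{traslacionU} propagates the $S$-membership back. Uniqueness and minimality come from showing that every Frobenius vector $q$ satisfies $g(q)=g^*$: the strict inequality $g(q)>g^*$ is ruled out by the same $\u$-reduction into $P$, while $g(q)<g^*$ is ruled out by exhibiting $q^*+m\u\in(q+\Tint({\rm L}(S)))\cap G(S)$ for $m$ large enough (this point is a gap by Lemma \ref{traslacionU}, contradicting the Frobenius property of $q$). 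Hence every Frobenius vector sits on the line $g(x)=g^*$; totality of the product order on that line, combined with the fact that within a $\u$-orbit the smallest element of $\N^2$ is the $P$-representative, forces the minimum Frobenius vector to coincide with $q^*$.

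The main obstacle I expect is the last step: making the choice of $m$ explicit enough to guarantee that $q^*+m\u-q$ lands in the \emph{open} cone (not merely on its boundary), which requires checking that the second component $u_2$ of $\u$ is strictly positive. This is true whenever $g_1>0$ and $g_2\le 0$, but verifying it cleanly, together with the fact that $q^*+m\u\in G(S)$, takes a brief coordinate calculation.
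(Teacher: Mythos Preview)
Your proposal is correct and follows essentially the same route as the paper. Both arguments split into the two sign cases, dispose of $g_1g_2>0$ by observing that all gaps lie in the triangle $\text{ConvexHull}(\{O,w_1,w_2\})$, and in the case $g_1g_2\le 0$ reduce everything to the fundamental parallelogram $P$ via the $\u$-translation (Lemma~\ref{traslacionU}), then pick out the gap with maximal $g$-value in $P$ as the Frobenius vector.

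Your write-up is in fact a little more careful than the paper's at two points. First, you explicitly stratify by $g$-level and use that the product order is total on each line $g(x)=c$ (since $g_2\le 0$), which makes the ``minimal closest'' element well defined; the paper uses that phrase without justification. Second, you flag the need for the witness $q^*+m\u$ to lie in the \emph{open} cone $q+\Tint({\rm L}(S))$, whereas the paper only places $\w+\lambda\u$ in $q+{\rm L}(S)$ and declares $q$ not a minimal Frobenius vector; your version closes that small gap (and your check that $u_2>0$ is exactly the missing ingredient). Apart from these refinements the skeleton of the two proofs is identical.
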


\begin{proof}
We consider two cases: $g_1g_2>0,$ and $g_1g_2\le 0.$

Assume that $g_1g_2>0,$ and let $w_1$ and $w_2$ be the unique two elements of  $\{x\in \R^2|g(x)=b\} \cap (OX\cup OY).$ In this case, note that ${\rm L}(S)$ is equal to $\Q^2_+$ and $({\rm L}(S)\cap \N^2)\setminus S$ is the nonempty finite set $\Delta=(\N^2\cap \text{ConvexHull}(\{0,w_1,w_2\}))\setminus S.$ Any maximal element $\omega$ in $\Delta$ satisfies that $(\omega+\Tint ({\rm L} (S))) \cap G(S) \subset S.$ So, these maximal elements are Frobenius vectors. Besides, for any non maximal element $\omega _1$ belonging to $\Delta,$ there exists $\omega\in \Delta$ maximal such that $\omega \in \omega_1 + {\rm L}(S)$ but it is possible that there is not a maximal element belonging to $\omega_1 + \Tint( {\rm L}(S)).$ In that case, $\omega_1$ is also a Frobenius vector.
Thus, every minimal Frobenius vector is a maximal elements in $\Delta$ or an element $\omega_1$ in $\Delta$ such that there is no maximal element belonging to $\Delta$ in $\omega_1 + \Tint( {\rm L}(S)).$

Assume now $g_1g_2\le 0.$ The cone ${\rm L}(S)$ is generated by $\u$ and $w,$ where $w$ is the point $\{x\in \R_+^2|g(x)=b\} \cap (OX\cup OY).$ Since $\u$ is a non-negative vector, any integer element $q\in {\rm L}(S)\setminus (w+{\rm L}(S))$ can be expressed as $q=p+\lambda \u$ with $\lambda \in \N$ and $p\in \N^2\cap \text{ConvexHull}(\{O,\u,w,w+\u\}).$ Thus, the minimal Frobenius vectors of $S$ belong to the finite and nonempty set $\Delta = (\N^2\cap \text{ConvexHull}(\{O,\u,w,w+\u\}))\setminus S.$ Let $\w$ be the minimal closest point to the line $\{g(x)=b\}$ belonging to $\Delta.$ We have $(\w + \Tint({\rm L}(S)))\cap \N^2 \subset S.$ Besides, for any other integer element $q\in ({\rm L}(S)\setminus S)\setminus (\w +{\rm L}(S))$ there exists $\lambda \in \N$ such that $\w +\lambda \u\notin S$ and  $\w +\lambda \u \in q+{\rm L}(S).$ Thus $q$ is not a minimal Frobenius vector and we conclude that $S$ has a unique minimal Frobenius vector which is the minimal of the closest elements to $\{g(x)=b\}$ belonging to $\Delta.$
\end{proof}

An algorithm to compute the minimal Frobenius vectors can be formulated from the above Proposition.

\begin{example}
In this example we illustrate the concept of minimal Frobenius vector. Let $f(x,y)= 3x+2y,$ $g(x,y)= x-y$ and $b= 10$ be the elements of the modular inequality $f(x,y)\mod b\le g(x,y)$ and $S$ be its proportionally modular semigroup associated. The point $\u$ is $(2,2)$ and $w=(10,0).$ Figure \ref{Frobenius_vector_fig1} illustrates the situation.
\begin{figure}[h]
\begin{center}
\begin{tabular}{|c|}\hline
\includegraphics[scale=.65]{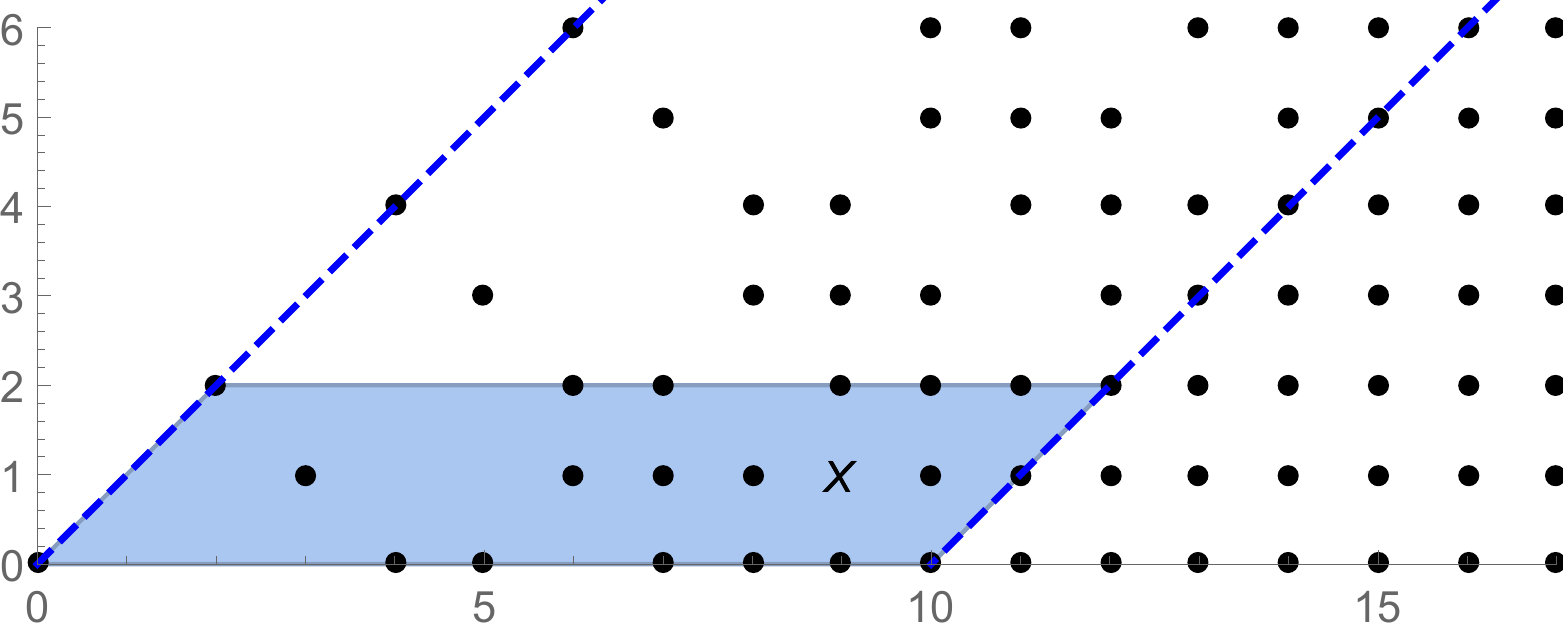}\\
\hline
\end{tabular}
\caption{Example of minimal Frobenius vector.}\label{Frobenius_vector_fig1}
\end{center}
\end{figure}
The black points are the points in $S,$ the dashed lines are $\{g(x,y)=0\}$ and $\{g(x,y)=b\}$ respectively, the shady region is the set $\text{ConvexHull}(\{O,\u,w,w+\u\}),$ and the point $\w=(9,1)$ is the unique minimal Frobenius vector. Note that $\w+\lambda \u,$ with $\lambda \in \N,$ is again a Frobenius vector.
\end{example}

\section{Some properties of proportionally modular semigroups of $\N^2.$}\label{propiedades_n2}

In this section we study the Cohen-Macaulayness, Gorensteinness and Buchsbaumness of proportionally modular affine semigroups of $\N^2.$ For this objective we consider only simplicial proportionally modular semigroups, and this occurs if the coefficients $g_1$ and $g_2$ of $g(x,y)=g_1x+g_2y$ are not both lesser than or equal to zero. Denote by $S$ the proportionally modular semigroup of the $\N$-solutions of $f(x,y)\mod b\le g(x,y),$ and by $\G$ the set defined after Proposition \ref{fgN2}. Again, $\u$ is the vector defined in Definition \ref{vectorU} and $\widetilde{\u}$ is the vector that appears after Proposition \ref{fgN2}.

The following result characterizes Cohen-Macaulay simplicial affine semigroups of $\N^2.$

\begin{proposition}(\cite[Corollary 2]{MR3212609})\label{C-M}
Let $T\subseteq \N^2$ be an affine simplicial semigroup,
the following conditions are equivalent:
\begin{enumerate}
\item $T$ is Cohen-Macaulay.
\item For all $v\in ({\rm L}(T)\cap \N^2)\setminus T$, $v+s_1$ or $v+s_2$ does not belong to $T$ where $s_1$ and $s_2$ are minimal generators of $T$ such that ${\rm L}(T)=\langle s_1,s_2\rangle.$
\end{enumerate}
\end{proposition}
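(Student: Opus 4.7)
The plan is to reduce the statement to the standard fact that, for a simplicial affine semigroup $T\subseteq\N^2$ with extremal ray generators $s_1,s_2$, the semigroup ring $\k[T]$ is Cohen-Macaulay precisely when $\chi^{s_1},\chi^{s_2}$ forms a regular sequence. Since $s_1,s_2$ are $\Q$-linearly independent, $\N s_1+\N s_2\cong\N^2$ and $\k[\chi^{s_1},\chi^{s_2}]$ is a polynomial subring of $\k[T]$; a direct check using the finite Ap\'ery set of $T$ modulo $\{s_1,s_2\}$ shows $\k[T]$ is module-finite over this subring, so $\chi^{s_1},\chi^{s_2}$ is a homogeneous system of parameters with respect to the $G(T)$-grading of $\k[T]$. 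For a Noetherian graded ring, such a system is regular if and only if the ring is Cohen-Macaulay, and since $\k[T]$ is a domain $\chi^{s_1}$ is automatically a non-zero-divisor. Hence the whole criterion collapses to requiring that $\chi^{s_2}$ act as a non-zero-divisor on $\k[T]/\chi^{s_1}\k[T]$.

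Next I would translate this ring-theoretic condition combinatorially. The quotient $\k[T]/\chi^{s_1}\k[T]$ has a $T$-homogeneous $\k$-basis $\{\chi^v\mid v\in T,\ v-s_1\notin T\}$, and multiplication by $\chi^{s_2}$ sends the class of $\chi^v$ to zero exactly when $v+s_2-s_1\in T$. Thus non-zero-divisorness amounts to the statement that, for every $v\in T$ with $v-s_1\notin T$, one also has $v+s_2-s_1\notin T$. Setting $w=v-s_1\in\Z^2$, this becomes the symmetric condition: whenever $w+s_1\in T$ and $w+s_2\in T$, necessarily $w\in T$.

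To match condition (2), I would then locate every such obstruction $w$ in ${\rm L}(T)\cap\N^2$. Because $T$ is simplicial, $s_1$ and $s_2$ are $\Q$-linearly independent, so writing $w+s_i=\mu_1^{(i)}s_1+\mu_2^{(i)}s_2$ with nonnegative rationals for $i=1,2$ and matching coefficients forces $\mu_j^{(1)}=\mu_j^{(2)}$ for $j=1,2$, whence $w\in{\rm L}(T)$. Moreover $w=(w+s_1)-s_1\in\Z^2$ and ${\rm L}(T)\subseteq\Q_\geq^2$, so $w\in{\rm L}(T)\cap\N^2$. Taking contrapositives, $\k[T]$ is Cohen-Macaulay if and only if no $v\in({\rm L}(T)\cap\N^2)\setminus T$ has both $v+s_1\in T$ and $v+s_2\in T$, which is precisely condition (2).

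The hard part should be the opening step: confirming that $\chi^{s_1},\chi^{s_2}$ is a homogeneous system of parameters of $\k[T]$ and invoking the graded equivalence between Cohen-Macaulayness and the regular-sequence property of an h.s.o.p. Once module-finiteness of $\k[T]$ over $\k[\chi^{s_1},\chi^{s_2}]$ is established and the grading set up correctly, the remainder is the bookkeeping sketched above.
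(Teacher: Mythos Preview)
The paper does not prove this proposition; it is quoted from \cite[Corollary~2]{MR3212609} and used as a black box, so there is no in-paper argument to compare against. Your sketch is the standard route to that result and is essentially correct: $\chi^{s_1},\chi^{s_2}$ is a homogeneous system of parameters for $\k[T]$ (each $\chi^{g_i}$ satisfies a monic equation $(\chi^{g_i})^D\in\k[\chi^{s_1},\chi^{s_2}]$, giving integrality and hence module-finiteness), Cohen--Macaulayness is equivalent to this h.s.o.p.\ being regular, and since $\k[T]$ is a domain the question reduces to whether $\chi^{s_2}$ is a non-zero-divisor on $\k[T]/\chi^{s_1}\k[T]$. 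Your combinatorial translation of that into the condition ``$w+s_1,w+s_2\in T\Rightarrow w\in T$'' is also right.

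There is one genuine slip in the last paragraph. From $w+s_1=\mu_1^{(1)}s_1+\mu_2^{(1)}s_2$ and $w+s_2=\mu_1^{(2)}s_1+\mu_2^{(2)}s_2$ you do \emph{not} get $\mu_j^{(1)}=\mu_j^{(2)}$: subtracting the two expressions for $w$ and using linear independence of $s_1,s_2$ gives $\mu_1^{(1)}-1=\mu_1^{(2)}$ and $\mu_2^{(2)}-1=\mu_2^{(1)}$. What you actually need is that
\[
w=(\mu_1^{(1)}-1)s_1+\mu_2^{(1)}s_2
\]
has both coefficients nonnegative, and this follows because $\mu_1^{(1)}-1=\mu_1^{(2)}\ge 0$ and $\mu_2^{(1)}\ge 0$; hence $w\in{\rm L}(T)$, and then $w\in\Z^2\cap{\rm L}(T)\subseteq\N^2$ as you say. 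With this correction the argument is complete.
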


As in the proof Proposition \ref{fgN2}, if $g_1$ and $g_2$ satisfies $g_1g_2>0,$ $\N^2\setminus S$ is finite. So, $S$ is simplicial but does not satisfy the second condition of Proposition \ref{C-M}, and thus $S$ is not Cohen-Macaulay. For this reason, in what follows we assume that $g_1g_2\le 0.$ Besides, we assume that the trivial case $S=\N^2$ does not happen. Fixed these conditions, next result characterizes the Cohen-Macaulay proportionally modular semigroup.

\begin{corollary}\label{proporcional_C-M}
Any proportionally modular semigroup $S$ is Cohen-Macaulay.
\end{corollary}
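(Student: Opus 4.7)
The plan is to apply Proposition \ref{C-M} directly, using Lemma \ref{traslacionU} as the key ingredient.

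First, I would identify which two minimal generators of $S$ span the cone ${\rm L}(S)$. Since we are in the case $g_1g_2 \leq 0$ (and $S$ is nontrivial), by the proof of Proposition \ref{fgN2} we may assume without loss of generality that $g_1 \geq 0$ and $g_2 \leq 0$. Then every $s \in S$ satisfies $g(s) \geq 0$, which pins $S$ inside the two-dimensional cone with extreme rays through $\widetilde{\u} = (\widetilde u_1, 0)$ (along the $x$-axis) and through $\u$ (along the line $g = 0$). Both $\widetilde{\u}$ and $\u$ are minimal generators of $S$: $\widetilde{\u}$ by its definition and Remark \ref{pmeje}, and $\u$ by Definition \ref{vectorU}. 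Thus ${\rm L}(S) = {\rm L}(\{\widetilde{\u}, \u\})$ in the sense required by Proposition \ref{C-M}, with $s_1 = \u$ and $s_2 = \widetilde{\u}$.

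Next, I would verify the criterion of Proposition \ref{C-M}: for every $v \in ({\rm L}(S) \cap \N^2) \setminus S$, at least one of $v + \u$ or $v + \widetilde{\u}$ lies outside $S$. This is where Lemma \ref{traslacionU} delivers the result immediately. That lemma states that for $v, w \in \N^2$ with $w = v + \u$, we have $v \in S$ if and only if $w \in S$. Hence $v \notin S$ forces $v + \u \notin S$, and the condition of Proposition \ref{C-M} holds with the generator $s_1 = \u$.

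Combining these two observations yields that $S$ satisfies the characterization of Proposition \ref{C-M}, so $S$ is Cohen-Macaulay. The symmetric case $g_1 \leq 0$, $g_2 \geq 0$ is handled identically, replacing $\widetilde{\u} \in OX$ by $\widetilde{\u} \in OY$. There is no real obstacle here: the substantive geometric content — that translation by $\u$ preserves membership in $S$ — was already established in Lemma \ref{traslacionU}, and the remaining work is just checking that $\u$ appears as a minimal generator on an extreme ray of ${\rm L}(S)$, which is built into the setup of Section \ref{n2}.
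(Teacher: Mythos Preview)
Your proposal is correct and follows essentially the same approach as the paper: identify $\u$ as a minimal generator on an extreme ray of ${\rm L}(S)$, then use Lemma \ref{traslacionU} to show that $v\notin S$ implies $v+\u\notin S$, so the criterion of Proposition \ref{C-M} is satisfied. The paper's proof is just a terser version of exactly this argument.
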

\begin{proof}
With the fixed conditions, the semigroup $S$ is an affine simplicial semigroup (Proposition \ref{fgN2}) and the vector $\u$ can be considered as one of the minimal generator $s_1$ or $s_2$ appearing in Proposition \ref{C-M}.
By Lemma \ref{traslacionU}, for any $v\in ({\rm L}(S)\cap \N^2)\setminus S$ it is verified that $v+\u\notin S.$ So, $S$ is Cohen-Macaulay.
\end{proof}

We focus now our attention on the Gorenstein property. We characterize this property in terms of the intersection of the
Ap\'{e}ry sets of some minimal generators belonging to the extremal rays of its associated cone. Recall that the Ap\'{e}ry set associated to an element $s$ in any semigroup $T$ is the set $\ap(s)=\{a\in T|a-s\notin T\}.$
The following result appears in \cite[Theorem 4.6]{RosalesCM}.

\begin{theorem}\label{Gorenstein_rosales}
For a given affine simplicial semigroup $T,$ the following conditions are equivalent:
\begin{enumerate}
\item $T$ is Gorenstein.
\item $T$ is Cohen-Macaulay and $\cap_{i=1}^2 \ap(s_i)$ has a unique maximal element (with respect to the order defined by $T$) where where $s_1$ and $s_2$ are minimal generators of $T$ such that ${\rm L}(T)=\langle s_1,s_2\rangle$.
\end{enumerate}
\end{theorem}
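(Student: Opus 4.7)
The plan is to reduce the equivalence to the classical criterion that a Cohen-Macaulay graded $\k$-algebra is Gorenstein if and only if its Cohen-Macaulay type equals one, and then express the type combinatorially via the Ap\'{e}ry intersection $W := \ap(s_1)\cap \ap(s_2).$ Throughout, $\leq_T$ denotes the order on $T$ defined by $a \leq_T b \iff b - a \in T.$

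First I would unpack Cohen-Macaulayness. Since $s_1,s_2$ are minimal generators lying on the two extremal rays of ${\rm L}(T),$ the elements $\chi^{s_1},\chi^{s_2}$ form a homogeneous system of parameters in $\k[T],$ and $T$ being Cohen-Macaulay is equivalent to this pair being a regular sequence. Under that hypothesis, every $a \in T$ admits a unique decomposition $a = w + n_1 s_1 + n_2 s_2$ with $w \in W$ and $n_i \in \N,$ so $\{\chi^w\}_{w \in W}$ is a free $\k[\chi^{s_1},\chi^{s_2}]$-basis of $\k[T]$ and descends to a $\k$-basis of the Artinian quotient $A = \k[T]/(\chi^{s_1},\chi^{s_2}).$

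Second I would compute the Cohen-Macaulay type of $\k[T]$ as $\dim_\k \mathrm{Soc}(A).$ In the monomial basis of $A,$ the class of $\chi^w$ lies in the socle if and only if $\chi^{w+a}=0$ in $A$ for every minimal generator $a$ of $T,$ which via the unique decomposition of step one translates to $w+a \notin W$ for every such $a.$ Writing an arbitrary $t \in T\setminus\{0\}$ as a sum of minimal generators and iterating, one sees that this is equivalent to $w+t \notin W$ for every nonzero $t \in T,$ i.e.\ to $w$ being $\leq_T$-maximal in $W.$ Hence the type of $\k[T]$ equals the number of $\leq_T$-maximal elements of $W.$

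Combining the two steps with the standard ``Cohen-Macaulay of type one'' characterization of Gorenstein rings produces the equivalence (1) $\iff$ (2). The main obstacle is the bookkeeping in step two: one must verify that the socle condition, which a priori needs only be tested against minimal generators of the graded maximal ideal, in fact captures the full $\leq_T$-maximality of $w$ in $W$ under the intrinsic order of $T$ (rather than the coordinatewise order on $\N^2$), and that this translation relies crucially on the unique decomposition furnished by Cohen-Macaulayness in step one.
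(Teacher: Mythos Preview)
The paper does not supply its own proof of this theorem: it is quoted from \cite[Theorem~4.6]{RosalesCM} and used as a black box. Your proposal is precisely the standard argument underlying that reference---reduce to the ``Cohen--Macaulay of type one'' criterion, identify $\k[T]$ as a free $\k[\chi^{s_1},\chi^{s_2}]$-module with basis indexed by $W=\ap(s_1)\cap\ap(s_2)$ (this is where Cohen--Macaulayness enters, giving the unique decomposition), and read off the socle of the Artinian reduction as the set of $\leq_T$-maximal elements of $W$. The bookkeeping you flag as the main obstacle is handled just as you indicate: if $w+t\in W$ with $t=a_1+\cdots+a_k$ a sum of minimal generators, then either $w+a_1\in W$, or $w+a_1-s_i\in T$ for some $i$ and hence $w+t-s_i\in T$, contradicting $w+t\in W$. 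So your sketch is correct and is essentially the proof in the cited source; there is nothing in the present paper to compare it against.
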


Since $\u$ and $\widetilde{\u}$ are minimal generators of $S$ and ${\rm L}(S)=\langle \u, \widetilde{\u}\rangle,$ the study of the set $\ap(\u)\cap\ap(\widetilde{\u})$ allows us to check whether a proportional modular semigroup is Gorenstein or not.

\begin{lemma}
Let $S$ be a proportional modular semigroup verifying the fixed conditions. The set $\ap(\u) \cap \ap(\widetilde{\u})$ is equal to the finite set $\{h\in \G | h-\u,h-\widetilde{\u}\notin S\}.$
\end{lemma}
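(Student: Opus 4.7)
The plan is to unfold the definition of the Apéry set and exploit the decomposition of $S$ obtained in the proof of Proposition \ref{fgN2}. Recalling $\ap(s) = \{a \in S \mid a - s \notin S\}$, the set of interest is
\[
\ap(\u) \cap \ap(\widetilde{\u}) = \{ a \in S \mid a - \u \notin S \text{ and } a - \widetilde{\u} \notin S \}.
\]
The containment $\supseteq$ is immediate, since $\G \subseteq S$ by definition, so every $h \in \G$ satisfying the two Apéry conditions automatically lies in $\ap(\u) \cap \ap(\widetilde{\u})$.

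For the reverse inclusion $\subseteq$, I would invoke the explicit description of $S$ extracted during the proof of Proposition \ref{fgN2}: every element $a \in S$ admits a decomposition $a = h + \lambda_1 \u + \lambda_2 \widetilde{\u}$ with $h \in \G$ and $\lambda_1, \lambda_2 \in \N$. Assume $a \in \ap(\u) \cap \ap(\widetilde{\u})$. If $\lambda_1 \geq 1$, then
\[
a - \u = h + (\lambda_1 - 1)\u + \lambda_2 \widetilde{\u}
\]
is a non-negative integer combination of elements already in $S$ (note $h \in \G \subseteq S$ and $\u, \widetilde{\u} \in S$), hence $a - \u \in S$, contradicting the Apéry condition at $\u$. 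Therefore $\lambda_1 = 0$, and by the symmetric argument $\lambda_2 = 0$, giving $a = h \in \G$.

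Finiteness of the right-hand set is then automatic: $\G = S \cap \text{ConvexHull}(\{O, \u, \u + w + \widetilde{\u}, w + \widetilde{\u}\})$ is the intersection of $\N^2$ with a bounded parallelogram, hence finite. Consequently any subset of $\G$, in particular $\{h \in \G \mid h - \u, h - \widetilde{\u} \notin S\}$, is finite as well.

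The argument is essentially bookkeeping once the decomposition $S = \G + \N\u + \N\widetilde{\u}$ is in hand; I do not foresee a substantive obstacle. The only point that needs a brief justification is that $h + (\lambda_1 - 1)\u + \lambda_2\widetilde{\u}$ really is in $S$ (not just in $\N^2$), which follows from closure of $S$ under addition together with $h, \u, \widetilde{\u} \in S$.
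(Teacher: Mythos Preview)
Your proof is correct and follows essentially the same approach as the paper: both invoke the decomposition $S = \{h + \lambda_1\u + \lambda_2\widetilde{\u} \mid h \in \G,\ \lambda_1,\lambda_2 \in \N\}$ from the proof of Proposition~\ref{fgN2} to show that any element of $\ap(\u)\cap\ap(\widetilde{\u})$ must have $\lambda_1=\lambda_2=0$ and hence lie in $\G$. Your write-up is in fact a bit more explicit than the paper's (you spell out both inclusions and the finiteness argument), but the underlying argument is the same.
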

\begin{proof}
Let $s$ be an element belonging to $S\setminus \G.$ Using a similar argument of the proof of Proposition \ref{fgN2}, we obtain that $S\setminus \G$ is the set $\{h+\lambda_1\u+\lambda_2\widetilde{\u} |h\in \G\mbox{ and }\lambda_1,\lambda_2\in \N\}.$ Note that the vectors $\u$ and $\widetilde{\u}$ can be considered as the minimal generators $s_1$ and $s_2$ of Theorem \ref{Gorenstein_rosales}, and then $(\cap_{i=1}^2 \ap(s_i))\cap (S\setminus \G)=\emptyset.$ Therefore, $\ap(s_1) \cap \ap(s_2)=\{h\in \G | h-s_1,h-s_2\notin S\}.$
\end{proof}

\begin{corollary}
Let $S$ be a proportional modular semigroup verifying the fixed conditions. The semigroup $S$ is Gorenstein iff there exists a unique maximal element belonging to $\{h\in \G | h-\u,h-\widetilde{\u}\notin S\}.$
\end{corollary}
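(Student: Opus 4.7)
The plan is to assemble this result directly from the three facts that have just been established, rather than reproving anything from scratch. The equivalence is just the composition of (i) the ring-theoretic characterization of Gorensteinness for simplicial affine semigroups (Theorem \ref{Gorenstein_rosales}), (ii) the automatic Cohen-Macaulayness in our setting (Corollary \ref{proporcional_C-M}), and (iii) the explicit description of the relevant intersection of Ap\'ery sets (the preceding Lemma).

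First I would verify that Theorem \ref{Gorenstein_rosales} applies with $s_1=\u$ and $s_2=\widetilde{\u}$. Under the fixed conditions ($g_1g_2\le 0$ and $S\neq \N^2$) the semigroup $S$ is simplicial and its cone satisfies ${\rm L}(S)=\langle \u,\widetilde{\u}\rangle$, as recorded in the discussion preceding the Lemma (and used in the proof of Proposition \ref{fgN2}). Both $\u$ and $\widetilde{\u}$ are minimal generators of $S$ by Definition \ref{vectorU} and by Remark \ref{pmeje} respectively, so they are a legitimate choice for the pair $s_1,s_2$ appearing in the statement of Theorem \ref{Gorenstein_rosales}.

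Next I would invoke Corollary \ref{proporcional_C-M}, which says that $S$ is automatically Cohen-Macaulay whenever the fixed conditions hold. Thus in Theorem \ref{Gorenstein_rosales} only the second half of condition (2) remains to be checked: $S$ is Gorenstein if and only if the set $\ap(\u)\cap \ap(\widetilde{\u})$ has a unique maximal element with respect to the order induced by $S$ (equivalently, with respect to $\preceq$ restricted to $S$). At this point I would substitute the explicit description supplied by the preceding Lemma, namely
\[
\ap(\u)\cap \ap(\widetilde{\u}) \;=\; \{h\in \G \mid h-\u,\, h-\widetilde{\u}\notin S\},
\]
so that the condition to be checked becomes exactly the existence of a unique maximal element in the finite set on the right-hand side.

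Combining these three observations closes both directions of the biconditional in one line, so the proof is essentially a citation chain rather than a calculation. The only point that deserves a sentence of justification is that the order on $\ap(\u)\cap\ap(\widetilde{\u})$ coming from $S$ coincides with the order induced on $\G$, which is immediate because this set is contained in $S$ and the order in Theorem \ref{Gorenstein_rosales} is precisely the order $a\le_S b\iff b-a\in S$. I do not anticipate any genuine obstacle; the bookkeeping step that needs mild care is simply confirming that $\u$ and $\widetilde{\u}$ play the roles of $s_1,s_2$ in Theorem \ref{Gorenstein_rosales}, which was already set up earlier in the section.
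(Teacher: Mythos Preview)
Your proposal is correct and follows essentially the same approach as the paper: the corollary is stated without proof there, since it is an immediate combination of Theorem~\ref{Gorenstein_rosales}, Corollary~\ref{proporcional_C-M}, and the preceding Lemma identifying $\ap(\u)\cap\ap(\widetilde{\u})$ with $\{h\in\G\mid h-\u,\,h-\widetilde{\u}\notin S\}$. Your extra sentence verifying that $\u,\widetilde{\u}$ are the legitimate choice for $s_1,s_2$ is a reasonable explicit check but adds nothing beyond what the paper already records in the paragraph before the Lemma.
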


For the last result of this work we have to define a semigroup associated to $S.$ In general, given an affine semigroup $T$ minimally generated by $\{s_1,\ldots ,s_t\}$, denote by $\overline T$ the affine semigroup $\{s\in \N^p| s+s_i\in T,\, \forall i=1,\ldots ,t\}$.
In \cite{RosalesBuchs}, it is given a characterization of Buchsbaum simplicial semigroups $T$ in terms of their semigroups $\overline T$.

\begin{theorem}\cite[Theorem 5]{RosalesBuchs}\label{RosalesBuchs}
The following conditions are equivalent:
\begin{enumerate}
\item $T$ is an affine Buchsbaum simplicial semigroup.
\item $\overline{T}$ is Cohen-Macaulay.
\end{enumerate}
\end{theorem}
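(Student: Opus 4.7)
The plan is to translate both sides into combinatorial statements about the semigroup and then match them. Since $T\subseteq\N^p$ is simplicial, the cone ${\rm L}(T)$ is spanned by its $d$ extremal minimal generators, call them $e_1,\dots,e_d$, and the monomials $\chi^{e_1},\dots,\chi^{e_d}$ form a homogeneous system of parameters for $\k[T]$ over the polynomial subring $\k[\chi^{e_1},\dots,\chi^{e_d}]$. Buchsbaumness of $\k[T]$ is equivalent to this particular system being a weak $\k[T]$-sequence, and this rewrites purely combinatorially: whenever $v\in {\rm L}(T)\cap\N^p$ has $v+e_i\in T$ for every $i$, one already has $v+s\in T$ for every minimal generator $s$ of $T$. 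By the defining property of $\overline{T}$, this last conclusion says exactly that $v\in\overline{T}$.

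For the direction (1)$\Rightarrow$(2), I would take $v\in({\rm L}(\overline{T})\cap\N^p)\setminus\overline{T}$ and assume $v+e_i\in\overline{T}$ for each extremal generator $e_i$ of $\overline{T}$, aiming to contradict this using the $\N^p$-version of Proposition \ref{C-M}. Unwinding the definition, $v+e_i\in\overline{T}$ says $v+e_i+s\in T$ for every minimal generator $s$ of $T$, and in particular $v+e_i\in T$ after extracting an appropriate combination. The combinatorial reformulation of Buchsbaumness from the previous paragraph then forces $v\in\overline{T}$, contradicting the choice of $v$. Hence $\overline{T}$ satisfies the CM criterion.

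For (2)$\Rightarrow$(1), I would reverse the argument: starting from a $v\in{\rm L}(T)\cap\N^p$ with $v+e_i\in T$ for all $i$, the Cohen-Macaulay behaviour of $\overline{T}$ (again via Proposition \ref{C-M}) prevents the existence of a genuine hole of $\overline{T}$ that is ``covered'' by all of the $e_i$, and this forces $v\in\overline{T}$. That is exactly the statement that $v+s\in T$ for every minimal generator $s$ of $T$, which is the combinatorial Buchsbaum condition, so $\chi^{e_1},\dots,\chi^{e_d}$ is a weak regular sequence on $\k[T]$.

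The main obstacle in both directions is the gap between the \emph{extremal} generators $e_1,\dots,e_d$ (which furnish the system of parameters of $\k[T]$ and the extremal rays of ${\rm L}(\overline{T})$) and the \emph{full} minimal generating set of $T$ used to define $\overline{T}$. The non-extremal minimal generators impose additional membership constraints, and the real work lies in verifying that once the extremal-generator constraints are satisfied the non-extremal ones follow automatically. A clean way to handle the bookkeeping is to identify the local cohomology $H^0_{\mathfrak{m}}\bigl(\k[T]/(\chi^{e_1},\dots,\chi^{e_d})\bigr)$ with a $\k$-vector space indexed by $\overline{T}\setminus T$; the annihilation of this module by $\mathfrak{m}$ (the hallmark of Buchsbaumness) then mirrors precisely the Cohen-Macaulay behaviour of $\overline{T}$ through the equations defining $\overline{T}$, which is where the equivalence ultimately lives.
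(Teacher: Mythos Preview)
The paper does not prove this theorem at all: it is quoted verbatim as \cite[Theorem~5]{RosalesBuchs} and used as a black box in the subsequent corollary. There is therefore no ``paper's own proof'' to compare your proposal against.

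As for the proposal itself, it is a reasonable outline of the strategy in the original Garc\'{\i}a-S\'anchez--Rosales argument, but it remains a sketch rather than a proof. You correctly flag the central difficulty --- the discrepancy between the extremal generators $e_1,\dots,e_d$ that furnish the system of parameters and the full minimal generating set that defines $\overline{T}$ --- but you do not actually close that gap. In particular, the step in $(1)\Rightarrow(2)$ where you write ``in particular $v+e_i\in T$ after extracting an appropriate combination'' is exactly the place where real work is needed, and your final paragraph acknowledges this by retreating to a local-cohomology reformulation without carrying it out. Also, the opening claim that Buchsbaumness is equivalent to \emph{this particular} system of parameters being a weak sequence is not automatic: Buchsbaum requires every system of parameters to be weak, and reducing to a single homogeneous one needs a separate graded argument (e.g.\ via the St\"uckrad--Vogel criterion that $\mathfrak{m}\cdot H^i_{\mathfrak{m}}(\k[T])=0$ for $i<d$). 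If you want a self-contained proof, you should consult \cite{RosalesBuchs} directly, where the Ap\'ery-set bookkeeping that resolves the extremal-vs-full-generator issue is made explicit.
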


\begin{corollary}
Let $S$ be a proportional modular semigroup verifying the fixed conditions. Then, $S$ is Buchsbaum.
\end{corollary}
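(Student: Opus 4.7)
The plan is to apply Theorem \ref{RosalesBuchs}, reducing the question to showing that $\overline{S}$ is Cohen-Macaulay. Under the fixed conditions, $S$ is simplicial with cone ${\rm L}(S)=\langle\u,\widetilde{\u}\rangle$, and both $\u$ and $\widetilde{\u}$ are minimal generators on the extremal rays. A coordinate check in the basis $(\u,\widetilde{\u})$ gives $({\rm L}(S)-\u)\cap({\rm L}(S)-\widetilde{\u})={\rm L}(S)$, so $\overline{S}\subseteq{\rm L}(S)\cap\N^2$; combined with the obvious inclusion $S\subseteq\overline{S}$, this yields ${\rm L}(\overline{S})={\rm L}(S)$. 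Hence $\overline{S}$ is itself simplicial with extremal rays $\R_+\u$ and $\R_+\widetilde{\u}$, and I would verify the Cohen-Macaulay criterion of Proposition \ref{C-M} using $\u$ as the distinguished minimal generator on one of these rays.

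The first preliminary is $\u\in\overline{S}$: for each minimal generator $t$ of $S$, $t\in S$ and Lemma \ref{traslacionU} give $t+\u\in S$. The second is that $\u$ is actually a minimal generator of $\overline{S}$. Write $\u=m\u_0$ with $\u_0$ the primitive integer vector on the line $g(x)=0$ and $m$, by Definition \ref{vectorU}, the least positive integer satisfying $mf(\u_0)\equiv 0\pmod b$. Any decomposition $\u=z_1+z_2$ with $z_1,z_2\in\overline{S}\setminus\{0\}$ forces both summands onto the extremal ray $\R_+\u_0$, so $z_1=j\u_0$ for some $1\le j\le m-1$; but $z_1\in\overline{S}$ would require $z_1+\u=(j+m)\u_0\in S$, equivalently $jf(\u_0)\equiv 0\pmod b$, contradicting the minimality of $m$. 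Choosing any minimal generator of $\overline{S}$ on the opposite extremal ray completes the pair demanded by Proposition \ref{C-M}.

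The core step is then short. It suffices to show that $v+\u\notin\overline{S}$ for every $v\in({\rm L}(\overline{S})\cap\N^2)\setminus\overline{S}$. If $v+\u\in\overline{S}$, then $(v+\u)+t=(v+t)+\u\in S$ for each minimal generator $t$ of $S$, and Lemma \ref{traslacionU} applied to the pair $v+t$, $(v+t)+\u$ yields $v+t\in S$ for every such $t$; hence $v\in\overline{S}$, a contradiction. Thus $\overline{S}$ is Cohen-Macaulay, and Theorem \ref{RosalesBuchs} concludes that $S$ is Buchsbaum. The main obstacle is the minimal-generator verification for $\overline{S}$, where the special minimality built into Definition \ref{vectorU} has to be exploited; once this is done, Lemma \ref{traslacionU} carries everything through by transferring $\u$-translation invariance from $S$ to $\overline{S}$.
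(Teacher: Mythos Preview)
Your proof is correct and follows the same overall strategy as the paper---reduce to Theorem \ref{RosalesBuchs} and exploit Lemma \ref{traslacionU}---but you work harder than necessary. The paper observes directly that $\overline{S}=S$: given $v\in\N^2$ with $v\notin S$, one has either $g(v)<0$ (so $g(v+\u)=g(v)<0$ and $v+\u\notin S$) or $v\in({\rm L}(S)\cap\N^2)\setminus S$, in which case Lemma \ref{traslacionU} gives $v+\u\notin S$; in both cases $v\notin\overline{S}$ since $\u$ is a minimal generator of $S$. With $\overline{S}=S$ in hand, Corollary \ref{proporcional_C-M} already supplies the Cohen-Macaulayness of $\overline{S}$, and Theorem \ref{RosalesBuchs} finishes.

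Your argument recovers this implicitly: your ``core step'' (that $v+\u\in\overline{S}$ forces $v\in\overline{S}$) is essentially the contrapositive of the inclusion $\overline{S}\subseteq S$ lifted to $\overline{S}$, and the machinery you build around it---the cone computation, the verification that $\u$ is a \emph{minimal} generator of $\overline{S}$ via the divisibility argument on $f(\u_0)\bmod b$---becomes unnecessary once $\overline{S}=S$ is noticed. Nothing you wrote is wrong, but the paper's route avoids checking Proposition \ref{C-M} a second time for $\overline{S}$.
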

\begin{proof}
Note that the element $\u\in S$ is a minimal generator of $S,$ and that for every $a\in {\rm L}(S)\setminus S,$ $a+\u\notin S.$ Thus, $S=\overline{S}.$
Since $S$ is Cohen-Macaulay (see Corollary \ref{proporcional_C-M}), by Theorem \ref{RosalesBuchs}, $\overline{S}$ is Cohen-Macaulay.
\end{proof}

\begin{example}
Consider the modular inequality $7x-y \mod 5 \le x-14y.$ Its minimal generating set is:
\begin{verbatim}
In[1]:= ProporcionallyModularAffineSemigroupN2[7, -1, 5, 1, -14]

Out[1]= {{3., 0.}, {4., 0.}, {5., 0.}, {16., 1.}, {17., 1.},
            {18., 1.}, {29., 2.}, {31., 2.}, {44., 3.},
            {57., 4.}, {70., 5.}}
\end{verbatim}
By the previous results, this semigroup is Cohen-Macaulay, Gorenstein and Buchsbaum. These properties can be checked externally by using {\tt Macaulay2} with the following commands (see \cite{M2} for computing with {\tt Macaulay2}):
\begin{verbatim}
installPackage("MonomialAlgebras")
V={{3, 0}, {4, 0}, {5, 0}, {16, 1}, {17, 1}, {18, 1}, {29, 2},
     {31, 2}, {44, 3}, {57, 4}, {70, 5}}
isCohenMacaulayMA V
isGorensteinMA V
isBuchsbaumMA V
\end{verbatim}
All the outputs obtained are {\tt true}.
\end{example}

\end{document}